\newtheorem{theorem}{Theorem}[section]
\newtheorem{proposition}[theorem]{Proposition}
\newtheorem{corollary}[theorem]{Corollary}
\newtheorem{example}[theorem]{Example}
\newtheorem{lemma}[theorem]{Lemma}
\theoremstyle{definition}
\newtheorem{definition}[theorem]{Definition}
\newcommand{\abs}[1]{\left\vert#1\right\vert}     
\newcommand{\CAT}{\operatorname{CAT}}
\newcommand{\UC}{\operatorname{UC}}
\newcommand{\ULC}{\operatorname{ULC}}
\newcommand{\N}{\mathbb{N}}                                            
\newcommand{\R}{\mathbb{R}}                                         
\newcommand{\eps}{\varepsilon} 
\begin{document}

\title{``Lion-Man'' and the  Fixed Point Property}

\author{Genaro L\'{o}pez-Acedo$^{a}$, Adriana Nicolae$^{b,a}$, Bo\.{z}ena Pi\k{a}tek$^{c}$}
\date{}
\maketitle

\begin{center}
{\footnotesize
$^{a}$Department of Mathematical Analysis - IMUS, University of Seville, Sevilla, Spain\\
$^{b}$Department of Mathematics, Babe\c s-Bolyai University, Kog\u alniceanu 1, 400084 Cluj-Napoca, Romania\\
$^{c}$Institute of Mathematics, Silesian University of Technology, 44-100 Gliwice, Poland\\
\ \\
E-mail addresses:  glopez@us.es (G. L\'{o}pez-Acedo), anicolae@math.ubbcluj.ro (A. Nicolae), Bozena.Piatek@polsl.pl (B. Pi\k{a}tek).
}
\end{center}

\begin{abstract}
This paper focuses on the relation between the fixed point property for continuous mappings and a discrete lion and man game played in a strongly convex domain. Our main result states that in locally compact geodesic spaces, the compactness of the domain is equivalent to its fixed point property, as well as to the success of the lion. The common link among these properties involves the existence of different types of rays, which we also discuss.
\end{abstract}

{\small {\sl Keywords}: Locally compact geodesic space, lion and man game, fixed point property, compactness.} 

\section{Introduction}
Pursuit-evasion games go back a long way. Their origin could be placed in the fifth century BC when Zeno of Elea proposed his eternal paradoxes whose analysis led to frutiful theories in diverse mathematical areas (see \cite{Na07}). Among these games, one of the most challenging is Rado's famous lion and man problem (see \cite{Li86,Cr64}) which can be described as follows: a lion and a man move in a circular arena with equal maximum speeds. If the arena is viewed as a closed disc, the positions of the lion and of the man are regarded as two points and the lion must move so that the center of the disc, its position and the one of the man are collinear in this order, can the lion catch the man? The solution to this apparently easy problem was provided many years later by Besicovitch who showed that the man can escape if he follows a polygonal spiraling path. A detailed discussion of this solution can be found in \cite{Na07}. Related problems are obtained if for instance the man is confined to move on the boundary of the disc or one changes other conditions in the original game. Such a variation is a discrete game where the movement of both the lion and the man is limited to constant or bounded jumps. More recently, games of this type have also been approached in convex subsets of the sphere (see, e.g., \cite{KR05,AleBisGhr09}). Here we focus on a discrete-time equal-speed pursuit game considered in \cite{AleBisGhr06,AleBisGhr10}.
  
The domain $A$ of our game is a convex subset of a uniquely geodesic space $(X,d)$. Initially, the lion and the man are located at two points in $A$, $L_0$ and $M_0$, respectively. One fixes $D > 0$ an upper bound on the distance the lion and the man may jump. After $i$ steps, the man  moves from the point $M_{i}$ to any point $M_{i+1} \in A$ which is within distance $D$. The lion moves from the point $L_i$ to the point $L_{i+1}$ along the geodesic from $L_{i}$ to $M_{i}$ such that its distance to $L_{i}$ equals $\min\{D,d(L_i,M_i)\}$. We say that the lion wins if $\lim_{i\to\infty} d(L_{i+1},M_i) = 0$. When we refer in the sequel to the Lion-Man game, we will always mean the game we have just described. In \cite{AleBisGhr10}, it was stated that in CAT$(0)$ spaces the lion always wins if and only if the domain is compact. Nonetheless, this characterization of compactness proved to be false as \cite{Bac12} contains an example of an unbounded CAT$(0)$ space where the lion always wins. 

It is clear from the proof given in \cite{AleBisGhr10} that the solution of this game is deeply connected with the construction of geodesic rays in the domain $A$ and so are its compactness and the fixed point property for continuous mappings (see \cite{LopPia15,LopPia16}). Starting from this observation, we prove the equivalence between these seemingly unrelated properties.

\begin{theorem}\label{mainthm}
Let $A$ be a nonempty, closed and strongly convex subset of a complete, locally compact, uniquely geodesic space. Then the following are equivalent:
\begin{itemize}
\item[(i)] $A$ is compact;
\item[(ii)] $A$ has the fixed point property for continuous mappings;
\item[(iii)] the lion always wins the Lion-Man game played in $A$. 
\end{itemize}
\end{theorem}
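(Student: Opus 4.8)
The plan is to treat compactness as the central property and to exploit that the ambient space, being complete, locally compact and geodesic, is proper by the Hopf--Rinow theorem; hence for the closed set $A$ compactness is the same as boundedness. The organizing tool is the following dichotomy, which I would isolate as a lemma: $A$ is unbounded if and only if it contains a geodesic ray. The nontrivial direction takes points $x_n\in A$ with $d(x_0,x_n)\to\infty$, looks at the geodesics $[x_0,x_n]$ (which lie in $A$ by convexity), and extracts a limit ray by Arzel\`a--Ascoli using properness; closedness and strong convexity of $A$ keep the ray inside $A$. With this in hand I would prove $(i)\Leftrightarrow(ii)$ and $(i)\Leftrightarrow(iii)$ separately.

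For $(i)\Rightarrow(ii)$: a compact strongly convex set is geodesically contractible to any of its points via the contraction $H(x,s)=\sigma(p,x,(1-s)\,d(p,x))$, where $\sigma(a,b,t)$ denotes the point at distance $t$ from $a$ on $[a,b]$; since geodesics depend continuously on their endpoints in a proper uniquely geodesic space, $H$ is continuous, and strong convexity likewise yields local contractibility, so $A$ is a compact absolute retract. The Lefschetz fixed point theorem then gives the fixed point property (the Lefschetz number of any self-map equals $1$). The converse $\neg(i)\Rightarrow\neg(ii)$ is where the ray does real work: picking a ray $\gamma$ with $\gamma(0)=p$, I would set $\rho(x)=d(p,x)+2$ and $f(x)=\sigma\bigl(x,\gamma(\rho(x)),1\bigr)$. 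Since $d\bigl(x,\gamma(\rho(x))\bigr)\ge \rho(x)-d(p,x)=2$, the point $f(x)$ is well defined, lies in $A$ by convexity, depends continuously on $x$, and satisfies $d(x,f(x))=1$; thus $f$ is a continuous self-map of $A$ without fixed points, and $A$ fails the fixed point property.

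For $(iii)$ I would first dispose of $\neg(i)\Rightarrow\neg(iii)$ by a clean straight-line escape: placing $L_0=\gamma(0)$ and $M_0=\gamma(s_0)$ with $s_0>D$ and letting the man run along the ray, $M_i=\gamma(s_0+iD)$, forces the pursuing lion to stay on the ray, $L_i=\gamma(iD)$, so $d(L_{i+1},M_i)=s_0-D>0$ for every $i$ and the lion never wins. The harder direction is $(i)\Rightarrow(iii)$. Writing $a_i=d(L_i,M_i)$, the pursuit rule gives $d(L_{i+1},M_i)=\max(a_i-D,0)$ and then the triangle inequality yields $a_{i+1}\le\max(a_i-D,0)+D$, so $\max(a_i,D)$ is nonincreasing and $a_i$ converges; the lion winning is exactly the statement $\lim_i a_i\le D$. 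The crux is therefore to exclude a persistent gap $a_i\to a_*>D$.

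The main obstacle is precisely this last exclusion, and here compactness must be used in an essential, dynamical way. Assuming $a_*>D$, I would pass to convergent subsequences of the configurations $(L_i,M_i,L_{i+1},M_{i+1})$, say $L_i\to L$, $M_i\to M$, $L_{i+1}\to L'$, $M_{i+1}\to M'$; in the limit $d(L',M)=a_*-D$, $d(L',M')=a_*$ and $d(M,M')\le D$, so the estimate $a_*=d(L',M')\le d(L',M)+d(M,M')$ forces $d(M,M')=D$ with equality in the triangle inequality, whence $M$ lies on $[L',M']$ and the man's limiting move is straight away from the lion with full step $D$. Iterating this rigidity along a diagonal/shift argument produces a forward trajectory in $A$ with constant gap $a_*$ in which the man flees along a geodesic, i.e. a geodesic ray inside $A$; this contradicts boundedness and hence compactness. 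Making the passage from ``one rigid limit step'' to ``an honest ray''---controlling the selection of subsequences and showing the concatenated limit segments form a single geodesic---is the delicate point I would expect to spend the most effort on, and it is exactly the place where strong convexity and properness are indispensable.
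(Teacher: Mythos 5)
Your overall architecture matches the paper's (the ray dichotomy as the central lemma, a Lefschetz argument for the compact case of the fixed point property, ray-based escapes for the noncompact cases, and a compactness/rigidity argument for the lion's win), but two steps as written have genuine gaps.

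First, in $(i)\Rightarrow(ii)$ you infer that $A$, being contractible and locally contractible, ``is a compact absolute retract'' and then invoke the Lefschetz fixed point theorem. That inference is false in general: by a classical example of Borsuk there exist compact, locally contractible metric spaces that are not ANRs, and the classical Lefschetz theorem requires the ANR property. This is precisely the obstruction that kept the Schauder conjecture open; the paper circumvents it by invoking Cauty's Lefschetz--Hopf theorem for $\ULC$ spaces (Theorem \ref{thm-Cauty}), which applies directly to the geodesic contraction $\lambda(x,y,t)$ without any ANR hypothesis. Your argument needs either Cauty's theorem or a separate proof that $A$ is an ANR; neither is supplied. (Your fixed-point-free map in the converse direction is fine and is a mild variant of the paper's nonexpansive map $f(x)=\gamma(d(\gamma(0),x)+1)$.)

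Second, in $(i)\Rightarrow(iii)$ you correctly extract the rigidity of a single limit configuration ($M\in[L',M']$ with $d(M,M')=D$), but the passage from one rigid step to a contradiction is exactly the part you leave open (``a diagonal/shift argument produces \dots a geodesic ray''), and concatenating limit segments into a single geodesic does not come for free: it is precisely where strong convexity must be used, via the betweenness property of Proposition \ref{prop-btw-local-geod}. The paper closes this step with a finite induction rather than an infinite ray: choosing nested subsequences with common indices so that $L_{i^n_j}\to l_n$, $M_{i^n_j}\to m_n$, one shows $l_{n}\in[l_0,m_n]$ with $d(l_0,l_n)=nD$, the induction step $l_{k+1}\in[l_0,m_{k+1}]$ following from $l_{k+1}\in[l_0,m_k]$, $m_k\in[l_{k+1},m_{k+1}]$ and betweenness. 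Taking $n=N$ with $(N-1)D\le\operatorname{diam}A<ND$ gives $d(l_0,l_N)=ND>\operatorname{diam}A$, a contradiction after finitely many steps. You should either carry out this induction (making the consistent choice of subsequences explicit) or genuinely construct the ray; as it stands the crux of the hardest implication is asserted rather than proved.
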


In Section \ref{section-rays} we give basic definitions and properties of geodesic spaces which constitute a natural generalization of Riemannian manifolds. In particular, we fix in this setting the meaning of some convexity concepts such as the betweenness property or the strong convexity of a set in the form it was done in \cite{Cha93} for Riemannian manifolds. We show that for convex sets, strong convexity is equivalent to the betweenness property, a fact that will be essential in the study of the Lion-Man game. It is clear that the existence of a geodesic ray in the domain implies that the lion cannot always win the Lion-Man game because he and the man could move along this ray maintaining their distance constant. At the same time, the existence of a closed topological ray in the domain implies, as a consequence of the Tietze extension theorem, the failure of the fixed point property for continuous mappings. The main contribution of this section is the analysis of the existence and the relation among different types of rays: topological, polygonal and geodesic.

Section \ref{fpp} deals with the fixed point property for continuous mappings in geodesic spaces. We say that a subset $A$ of a topological space has the fixed point property if any continuous self-mapping defined on $A$ has at least one fixed point. Klee gave in \cite{klee} a characterization of compactness for convex subsets of a locally convex linear space by means of the fixed point property for continuous mappings. We prove a counterpart of this result in the setting of geodesic spaces, Theorem \ref{characfpp-thm}, which states that a closed convex subset of a complete, locally compact, uniquely geodesic space is compact if and only if it has the fixed point property for continuous mappings.

In Section \ref{lion-game} we analyze the Lion-Man game. After carefully fixing the rules of the game and explaining via Example \ref{ex-solution} the definition of its solution, we prove the main result of this section, Theorem \ref{thm-compact-lion}, which characterizes compactness of closed and strongly convex sets in a complete, locally compact, uniquely geodesic space in terms of the success of the lion. Theorem \ref{mainthm} is solely a synthesis of Theorems  \ref{characfpp-thm} and \ref{thm-compact-lion}. Finally, we would like to mention another recent work \cite{Yuf18} that studies the solution of this game in compact geodesic spaces.

\section{Convexity and rays in geodesic spaces}\label{section-rays} 
We start by fixing notation and recalling some basic facts about geodesic spaces. A detailed discussion on geodesic spaces can be found in \cite{Bri99,Bur01,Pap05}. Let $(X,d)$ be a metric space. For $x \in X$ and $r > 0$, we denote the {\it closed ball} centered at $x$ with radius $r$ by $\overline{B}(x,r)$. If $A$ is a nonempty subset of $X$, the {\it diameter} of $A$ is $\text{diam}\; A = \sup\{d(a,a'): a,a' \in A\}$ and the {\it distance} of a point $x \in X$ to $A$ is $\text{dist}(x,A) = \inf\{d(x,a) : a \in A\}$. The {\it distance} between two nonempty subsets $A$ and $B$ of $X$ is given by $d(A,B) = \inf\{d(a,b):a \in A, b \in B\}$. 

Let $x,y \in X$. A {\it geodesic} joining $x$ and $y$ is a mapping $\gamma:[0,l]\subseteq {\mathbb R}\to X$ such that $\gamma(0)=x$, $\gamma(l)=y$ and $d(\gamma(s),\gamma(t))=\abs{s-t}$ for all $s,t\in[0,l]$. This immediately yields $l = d(x,y)$. The geodesic $\gamma$ can be linearly reparametrized by the interval $[0,1]$ to obtain a mapping $\gamma' : [0,1] \to X$, $\gamma'(t) = \gamma(tl)$ and in this case $\gamma'$ is called a {\it linearly reparametrized geodesic}. The image $\gamma([0,l])$ of a geodesic $\gamma$ is called a {\it geodesic segment} joining $x$ and $y$. If instead of the interval $[0,l]$ one considers $[0,\infty)$, then the image of $\gamma$ is called a {\it geodesic ray} (sometimes we also refer to the mapping $\gamma$ as a geodesic ray). A point $z\in X$ belongs to a geodesic segment joining $x$ and $y$ if and only if there exists $t\in [0,1]$ such that $d(z,x)= td(x,y)$ and $d(z,y)=(1-t)d(x,y)$. In general, geodesic segments between two fixed points may not be unique. Whenever there is a unique geodesic between $x$ and $y$, we denote the unique geodesic segment joining them by $[x,y]$. We consider in the sequel the framework of metric spaces where every two points are joined by a (unique) geodesic. Such spaces are called {\it (uniquely) geodesic spaces}. We state next a property which we will use in the subsequent section and can be found in \cite[Chapter I, Lemma 3.12]{Bri99}.

\begin{lemma} \label{lemma-geod-unif-conv}
Let $X$ be a complete, locally compact, uniquely geodesic space. Suppose that $\gamma, \gamma_n : [0,1] \to X$ are linearly reparametrized geodesics such that the sequences $(\gamma_n(0))$ and $(\gamma_n(1))$ converge to $\gamma(0)$ and $\gamma(1)$, respectively. Then $(\gamma_n)$ converges uniformly to $\gamma$. 
\end{lemma}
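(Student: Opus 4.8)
The plan is to combine a compactness argument of Arzel\`a--Ascoli type with the uniqueness of geodesics. Write $a = \gamma(0)$, $b = \gamma(1)$ and $a_n = \gamma_n(0)$, $b_n = \gamma_n(1)$, so that by hypothesis $a_n \to a$ and $b_n \to b$. Since each $\gamma_n$ is a linearly reparametrized geodesic, $d(\gamma_n(s), \gamma_n(t)) = l_n \abs{s-t}$ where $l_n = d(a_n, b_n)$, and continuity of the metric gives $l_n \to l := d(a,b)$. In particular $(l_n)$ is bounded, so the family $\{\gamma_n\}$ is uniformly Lipschitz and hence equicontinuous.

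First I would localize all the maps inside a single compact set. Because $X$ is a geodesic space it is a length space, and a complete, locally compact length space is proper (closed bounded sets are compact) by the Hopf--Rinow theorem. Since $(a_n)$ is convergent and $\sup_n l_n < \infty$, the estimate $d(\gamma_n(t), a) \le l_n + d(a_n, a)$ shows that every point of every segment $\gamma_n([0,1])$ lies in a fixed closed ball $K = \overline{B}(a, R)$ for $R$ large enough; by properness $K$ is compact. Thus $(\gamma_n)$ is an equicontinuous sequence of maps from the compact interval $[0,1]$ into the compact space $K$, and Arzel\`a--Ascoli yields a subsequence $(\gamma_{n_k})$ converging uniformly to some $\sigma : [0,1] \to X$.

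Next I would identify the limit. Passing to the limit in $d(\gamma_{n_k}(s), \gamma_{n_k}(t)) = l_{n_k}\abs{s-t}$ shows $d(\sigma(s),\sigma(t)) = l\abs{s-t}$, so $\sigma$ is a linearly reparametrized geodesic; moreover $\sigma(0) = a$ and $\sigma(1) = b$. Since $X$ is uniquely geodesic, $\sigma = \gamma$. Hence every uniformly convergent subsequence of $(\gamma_n)$ has limit $\gamma$.

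Finally, to upgrade subsequential convergence to convergence of the full sequence, I would argue by contradiction: if $\gamma_n$ did not converge uniformly to $\gamma$, there would exist $\eps > 0$ and a subsequence with $\sup_{t} d(\gamma_{n_k}(t), \gamma(t)) \ge \eps$; this subsequence still lies in $K$ and is equicontinuous, so by Arzel\`a--Ascoli it admits a further subsequence converging uniformly, necessarily to $\gamma$, contradicting the separation. The main obstacle is the second step---ensuring that all the segments are contained in one compact set---which is precisely where the three hypotheses (complete, locally compact, geodesic) are used together through properness; the remaining arguments are the standard interplay of equicontinuity and uniqueness of geodesics.
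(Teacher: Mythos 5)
Your proof is correct. The paper does not prove this lemma itself but simply cites \cite[Chapter I, Lemma 3.12]{Bri99}, and the argument given there is essentially the one you wrote: properness via Hopf--Rinow, equicontinuity and Arzel\`a--Ascoli to extract a uniformly convergent subsequence, identification of the limit as a geodesic with the right endpoints (hence equal to $\gamma$ by uniqueness), and the standard subsequence trick to upgrade to convergence of the full sequence.
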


Although in our main results there is no need to impose any curvature bounds on the space, nonpositively curved spaces in the sense of Busemann and Alexandrov spaces of curvature bounded above or below constitute relevant settings where rather general properties that we discuss below hold. Note that in what follows the curvature bounds are considered globally.

Let $(X,d)$ be a geodesic metric space. We say that $X$ is {\it nonpositively curved in the sense of Busemann} if given any two geodesics $\gamma:[0,l] \to X$ and $\gamma':[0,l'] \to X$,
\[d(\gamma(tl),\gamma'(tl')) \le (1-t)d(\gamma(0),\gamma'(0)) + td(\gamma(l),\gamma'(l')) \quad \text{for any } t \in [0,1].\]
Every nonpositively curved space in the sense of Busemann is uniquely geodesic and satisfies, in particular, the following property: for every $x \in X$ and every geodesic $\gamma:[0,l] \to X$ we have
\begin{equation}\label{eq-metric-conv}
d(x, \gamma(tl)) \le (1-t)d(x,\gamma(0))+td(x,\gamma(l)) \quad\text{for any } t \in [0,1].
\end{equation}

In the following we consider $(X,d)$ a geodesic space and let, for $\kappa \in \mathbb{R}$, $M^2_{\kappa}$ be the complete, simply connected $2$-dimensional Riemannian manifold of constant sectional curvature $\kappa$. Moreover, let $D_\kappa$ denote the diameter of $M_\kappa^2$, that is, $D_\kappa=\infty$ for $\kappa\le 0$ and $D_\kappa=\pi/\sqrt{\kappa}$ for $\kappa>0$. A {\it geodesic triangle} $\Delta = \Delta(x_1,x_2,x_3)$ in $X$ consists of three points $x_1, x_2,x_3 \in X$ (its {\it vertices}) and three geodesic segments (its {\it sides}) joining each pair of points. A {\it comparison triangle} for $\Delta$ is a triangle $\overline{\Delta} = \Delta(\overline{x}_1, \overline{x}_2, \overline{x}_3)$ in $M^2_{\kappa}$ with $d(x_i,x_j) = d_{M^2_{\kappa}}(\overline{x}_i,\overline{x}_j)$ for $i,j \in \{1,2,3\}$. For $\kappa$ fixed, comparison triangles of geodesic triangles always exist and are unique up to isometry. 

A geodesic triangle $\Delta$ satisfies the {\it $\CAT(\kappa)$ inequality} if for every comparison triangle $\overline{\Delta}$ in $M^2_{\kappa}$ of $\Delta$ and for every $x,y \in \Delta$ we have
\[d(x,y) \le d_{M^2_{\kappa}}(\overline{x},\overline{y}),\]
where  $\overline{x},\overline{y} \in \overline{\Delta}$ are the comparison points of $x$ and $y$, i.e., if $x$ belongs to the side joining $x_i$ and $x_j$, then $\overline{x}$ belongs to the side joining  $\overline{x}_i$ and $\overline{x}_j$ such that $d(x_i,x) = d_{M^2_{\kappa}}(\overline{x}_i,\overline{x})$.

A {\it $\CAT(\kappa)$ space} is a metric space where each two points at distance less than $D_\kappa$ are joined by a geodesic and where every geodesic triangle having perimeter less than $2D_\kappa$ satisfies the CAT$(\kappa)$ inequality. $\CAT(\kappa)$ spaces are also known as spaces of {\it curvature bounded above} by $\kappa$ (in the sense of Alexandrov). In any $\CAT(\kappa)$ space there exists a unique geodesic joining each pair of points at distance less than $D_\kappa$. A geodesic space is said to have {\it curvature bounded below} by $\kappa$ (in the sense of Alexandrov) if every geodesic triangle in it having perimeter less than $2D_\kappa$ satisfies the reverse of the $\CAT(\kappa)$ inequality. When referring to these spaces, we sometimes omit the bound $\kappa$ if it can be chosen arbitrarily. 

Hilbert spaces constitute a prime example of a geodesic spaces that have curvature bounded both above and below by $0$. The complex Hilbert ball with the hyperbolic metric is another example of a CAT$(0)$ space which also has curvature bounded below. Other important examples of CAT$(0)$ spaces include, e.g., Hadamard manifolds or Euclidean buildings. Among spaces of curvature bounded below by $0$ one finds, e.g., complete Riemannian manifolds of nonnegative sectional curvature or convex surfaces in $\R^3$ with the induced metric. We refer to \cite{Bri99,Bur01} for more details.

An {\it ${\R}$-tree} is a uniquely geodesic space $X$ such that if $x,y,z \in X$ with $[y,x]\cap [x,z]=\{x\}$, then $[y,x]\cup [x,z]=[y,z]$. It is easily seen that a metric space is an ${\R}$-tree if and only if it is a $\CAT(\kappa)$ space for any real $\kappa$. 

Although some of the concepts and properties below can also be given without assuming uniqueness of geodesics, because in our main results we use this condition, we assume for simplicity in the rest of this section that $(X,d)$ is  a uniquely geodesic space. In order to introduce and relate several convexity notions, we first recall that a geodesic $\gamma : [0, l] \to X$ is said to be {\it extendable} beyond the point $\gamma(l)$ if $\gamma$ is the restriction of a geodesic $\gamma' : [0, l'] \to X$ with $l' > l$. 

We say that two geodesics {\it bifurcate} if they have a common endpoint and coincide on an interval, but one is not an extension of the other. Geodesic spaces of curvature bounded below cannot have bifurcating geodesics. Since geodesics in $\R$-trees can bifurcate, they are not spaces of curvature bounded below.

A {\it local geodesic} is a mapping $\gamma:[0,l]\subseteq {\mathbb R}\to X$ with the property that for every $t \in [0,l]$ there exists a nontrivial closed interval $I$ containing $t$ in its interior such that $\left.\gamma\right|_{I \cap [0,l]}$ is a geodesic. Note that in a nonpositively curved space in the sense of Busemann, any local geodesic is in fact a geodesic (see, e.g., \cite[Corollary 8.2.3]{Pap05}). Likewise, in any CAT$(\kappa)$ space, every local geodesic of length at most $D_\kappa$ is a geodesic (see, e.g., \cite[Chapter II, Proposition 1.4]{Bri99}). 

We will use the following two convexity notions. For corresponding definitions in the Riemannian setting, we refer, e.g., to \cite[Chapter IX.6]{Cha93}.  
\begin{definition}
Let $A$ be a set in $X$. We say that $A$ is {\it convex} if given any two points $x, y \in A$, the geodesic segment $[x,y]$ is contained in $A$. The set $A$ is said to be {\it strongly convex} if for every $x, y \in A$, the geodesic segment $[x,y]$ is contained in $A$ and there is no other local geodesic in $A$ joining $x$ and $y$. 
\end{definition}
Note that there exist convex sets that are not strongly convex (see, e.g., \cite{Ka}).

We recall next a betweenness property which also appears in \cite{Pap05} and plays an essential role in the study of the considered pursuit-evasion problem. 
\begin{definition}
Let $A$ be a set in $X$. We say that $A$ satisfies the {\it betweenness property} if for every four pairwise distinct points $x, y, z, w \in A$, if $y \in [x,z]$ and $z \in [y,w]$, then $y,z \in [x,w]$.
\end{definition}
The betweenness property is satisfied by all sets in nonpositively curved spaces in the sense of Busemann (see \cite[Proposition 8.2.4]{Pap05}). Additional examples of geodesic spaces where this holds have been given in \cite{Nic13}: one can assume that $X$ satisfies \eqref{eq-metric-conv} (see \cite[Proposition 3.4]{Nic13}) or that geodesics in $X$ are extendable and do not bifurcate (which, as mentioned before, happens in spaces of curvature bounded below) (see \cite[Proposition 3.5, Corollary 3.6]{Nic13}). 

\begin{proposition} \label{prop-btw-local-geod}
Let $A$ be a convex set in $X$. Then the following statements are equivalent:
\begin{itemize}
\item [(i)] $A$ is strongly convex;
\item[(ii)] every local geodesic in $A$ is a geodesic;
\item[(iii)] $A$ has the betweenness property.
\end{itemize}
\end{proposition}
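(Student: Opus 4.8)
The plan is to treat the equivalences in the order (i) $\Leftrightarrow$ (ii) and then (ii) $\Leftrightarrow$ (iii), since the first pair follows immediately from the uniqueness of geodesics while the second carries the real content. For (ii) $\Rightarrow$ (i), given $x,y\in A$, any local geodesic in $A$ joining them is a geodesic by hypothesis, hence coincides with $[x,y]$ because $X$ is uniquely geodesic; together with convexity ($[x,y]\subseteq A$) this is precisely strong convexity. For (i) $\Rightarrow$ (ii), I would note that a local geodesic $\gamma\colon[0,l]\to A$ with endpoints $x=\gamma(0)$, $y=\gamma(l)$ that is not a geodesic satisfies $d(x,y)<l$, so it is a local geodesic joining $x$ and $y$ different from $[x,y]$, contradicting strong convexity.

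For (ii) $\Rightarrow$ (iii), take pairwise distinct $x,y,z,w\in A$ with $y\in[x,z]$ and $z\in[y,w]$, and concatenate the segments along $x\to y\to z\to w$. Since $y\in[x,z]$, near $y$ this path is a sub-arc of the single geodesic $[x,z]$, so it is locally geodesic there; since $z\in[y,w]$, it is likewise locally geodesic at $z$; and on the interiors of the three pieces it is plainly geodesic. Convexity of $A$ places the whole path in $A$, so it is a local geodesic in $A$; by (ii) it is a geodesic, necessarily equal to $[x,w]$, and $y,z$ lie on it, giving $y,z\in[x,w]$.

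The remaining and most delicate implication is (iii) $\Rightarrow$ (ii). Given a local geodesic $\gamma\colon[0,l]\to A$ parametrized by arc length, I would run a connectedness argument on $S=\{s\in[0,l] : \gamma|_{[0,s]} \text{ is a geodesic}\}$. The local geodesic property gives an initial interval $[0,\delta]\subseteq S$, so $S\neq\emptyset$; and $S$ is relatively closed because $d(\gamma(0),\gamma(s))=s$ passes to limits and, for an arc-length path, the single equality $d(\gamma(0),\gamma(s))=s$ already forces $\gamma|_{[0,s]}$ to be isometric. To prove $S$ is open, fix $s\in S$ with $0<s<l$, pick $a<s$ inside the local-geodesic interval around $s$ and a small $\eps>0$ with $\gamma|_{[a,s+\eps]}$ a geodesic. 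Then $\gamma(a)\in[\gamma(0),\gamma(s)]$ and $\gamma(s)\in[\gamma(a),\gamma(s+\eps)]$, so the betweenness property applied to $\gamma(0),\gamma(a),\gamma(s),\gamma(s+\eps)$ yields $\gamma(s)\in[\gamma(0),\gamma(s+\eps)]$, whence $d(\gamma(0),\gamma(s+\eps))=s+\eps$ and $s+\eps\in S$. Thus $S=[0,l]$ and $\gamma$ is a geodesic.

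The main obstacle will be verifying the hypothesis of betweenness, namely that $\gamma(0),\gamma(a),\gamma(s),\gamma(s+\eps)$ are pairwise distinct. All pairs except possibly $\{\gamma(0),\gamma(s+\eps)\}$ are distinct because the points lie on the injective geodesic arcs $\gamma|_{[0,s]}$ and $\gamma|_{[a,s+\eps]}$. For the last pair, comparing $d(\gamma(a),\gamma(0))=a$ with $d(\gamma(a),\gamma(s+\eps))=s+\eps-a$ shows that $\gamma(0)=\gamma(s+\eps)$ can occur only when $a=(s+\eps)/2$; since we may assume $s>0$ on the range where openness must be checked, I can always choose $a$ close enough to $s$ to rule this out.
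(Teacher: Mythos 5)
Your proposal is correct and follows essentially the same route as the paper: (i)$\Leftrightarrow$(ii) via uniqueness of geodesics, (ii)$\Rightarrow$(iii) by concatenating the segments into a local geodesic from $x$ to $w$, and (iii)$\Rightarrow$(ii) by extending the maximal geodesic initial segment of a local geodesic using betweenness (your open--closed argument on $S$ is just a reformulation of the paper's ``maximal $t_0$'' argument). Your explicit verification that the four points fed to the betweenness property are pairwise distinct is a detail the paper's proof glosses over, and is a welcome addition.
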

\begin{proof}
The equivalence between (i) and (ii) is obvious. 

We prove next that (ii) implies (iii). Let $x,y,z,w$ be pairwise distinct points in $A$ with $y \in [x,z]$ and $z \in [y,w]$. The union of $[x,z]$ and $[y,w]$ is the image of a local geodesic $\gamma$ with $\gamma(0) = x$ and $\gamma(l) = w$ that contains $y$ and $z$. By (ii), $\gamma$ is actually the unique geodesic joining $x$ and $w$, so $y,z \in [x,w]$.

For the converse implication, let $\gamma : [0,l] \to A$ be a local geodesic and $t_0 \in [0,l]$ be maximal such that $\left.\gamma\right|_{[0,t_0 ]} \text{ is a geodesic}$. Suppose that $t_0 < l$. Since $\gamma$ is a local geodesic, there exists $\eps \in (0,t_0)$ with $t_0 + \eps < l$ such that $\left.\gamma\right|_{[t_0-\eps,t_0+\eps]}$ is a geodesic. We can now apply the betweenness property to the respective points $\gamma(0)$, $\gamma(t_0 - \eps)$, $\gamma(t_0)$ and $\gamma(t_0 + \eps)$ to conclude that $\left.\gamma\right|_{[0,t_0+\eps]}$ is a geodesic, which contradicts the maximality of $t_0$. \qed
\end{proof}

Thus, if $X$ is a nonpositively curved space in the sense of Busemann or a $\CAT(\kappa)$ space of diameter less than $D_k$, then any convex subset of it is in fact strongly convex. Moreover, this is still true when $X$ is a uniquely geodesic space that either satisfies \eqref{eq-metric-conv} or has extendable geodesics and curvature bouneded below (or, more generally, nonbifurcating geodesics). 

As before, $(X,d)$ stands in the following for a uniquely geodesic space. A {\it topological ray} in $X$ is a homeomorphic image of the interval $[0,\infty)$. We say that a topological ray is {\it closed} if it is closed as a subset of $X$. Suppose $L$ is a closed topological ray and denote by $h$ such  a homeomorphism whose image is $L$. Then $L$ is called a {\it polygonal ray} if there exists an unbounded strictly increasing sequence of real numbers $0=v_0 < v_1 < \ldots < v_n < \ldots$ such that $h([v_n,v_{n+1}])$ is a geodesic segment for every $n \in \mathbb{N}$. Obviously, every geodesic ray is a polygonal ray which, in its turn, is a closed topological ray by definition. On the other hand, as pointed out below, one can use the fixed point property for continuous mappings to deduce the existence of geodesic rays from the existence of closed topological ones.

If $X$ contains a closed topological ray $\Gamma$, then it does not have the fixed point property for continuous mappings. To see this, let $h:[0,\infty) \to \Gamma$ be a homeomorphism. By the Tietze extension theorem, $h^{-1}$ can be extended to a continuous $H : X  \to [0,\infty)$. Clearly, there exists a fixed point free continuous mapping $f : \Gamma \to \Gamma$. Then $f \circ h \circ H : X \to \Gamma$ is continuous, has no fixed points and regarding it as taking values in $X$, this implies that $X$ does not have the fixed point property for continuous mappings. 

By \cite[Theorem 3.4]{Ki04}, a complete $\R$-tree has the fixed point property for continuous mappings if and only if it does not contain a geodesic ray. Using the above remark, this immediately yields that in complete $\R$-trees, the existence of geodesic rays is in fact equivalent to the existence of closed topological rays. Consider the $\mathbb{R}$-tree obtained by endowing $\R^2$ with the so-called river metric $d_r$ defined by
$$
d_r((x_1,x_2),(y_1,y_2))=\left\{
\begin{array}{ll}
|x_2 - y_2| & \mbox{if } x_1 = y_1 \\[1em]
|x_2|+|y_2|+|x_1-y_1| & \mbox{otherwise}.
\end{array}\right.
$$
This $\R$-tree is complete and nonlocally compact. Its closed unit ball is noncompact and contains no geodesic rays, hence no closed topological rays. In the same line, an example of a complete noncompact CAT$(0)$ space without polygonal rays is constructed in \cite{LopPia15}. Such examples cease to exist when assuming that the space has curvature bounded below or is locally compact. More precisely, by \cite[Theorem 8]{LopPia15}, any closed, convex and noncompact subset of a complete uniquely geodesic space of curvature bounded below contains a polygonal ray. Furthermore, the following result, which is Theorem 3.2 in \cite{LopPia16}, is a direct consequence of the Arzela-Ascoli and the Hopf-Rinow theorems. 

\begin{theorem} \label{thm-noncompact-ray}
A closed and convex subset of a complete, locally compact, uniquely geodesic space is noncompact if and only if it contains a geodesic ray.
\end{theorem}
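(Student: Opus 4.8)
The plan is to treat the two implications separately, the forward direction (noncompact $\Rightarrow$ contains a geodesic ray) being the substantive one. For the easy direction, suppose $A$ contains a geodesic ray $\gamma : [0,\infty) \to A$. Since $\gamma$ is an isometric embedding, $d(\gamma(0),\gamma(n)) = n$ for every $n \in \N$, so $A$ is unbounded and hence noncompact.

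For the converse, I would first record the two structural facts that make the named theorems applicable. As a closed subset of a complete, locally compact space, $A$ is itself complete (closed subsets of complete spaces are complete) and locally compact (for $x \in A$, intersecting a compact neighborhood of $x$ in $X$ with $A$ gives a compact neighborhood of $x$ in $A$). Moreover, since $A$ is convex, any two of its points are joined by a geodesic segment lying in $A$, so $A$ is a geodesic space in its own right. The Hopf--Rinow theorem then applies: a complete, locally compact geodesic space is proper, i.e.\ every closed ball $\overline{B}(x,r) \cap A$ is compact. In particular, if $A$ were bounded it would be closed and bounded, hence compact; since $A$ is noncompact, it must be unbounded.

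Now fix a basepoint $x_0 \in A$. Unboundedness yields points $y_n \in A$ with $r_n := d(x_0,y_n) \to \infty$. Let $\gamma_n : [0,r_n] \to A$ be the unit-speed geodesic from $x_0$ to $y_n$, which lies in $A$ by convexity. For each fixed $T > 0$ and all $n$ with $r_n \ge T$, the restrictions $\left.\gamma_n\right|_{[0,T]}$ are $1$-Lipschitz maps whose images lie in the compact set $\overline{B}(x_0,T) \cap A$; they are thus equicontinuous and uniformly bounded, so Arzel\`a--Ascoli supplies a uniformly convergent subsequence. Carrying this out successively on $[0,1], [0,2], \ldots$ and passing to the diagonal subsequence produces a map $\gamma : [0,\infty) \to A$ to which a subsequence of the $\gamma_n$ converges uniformly on compact sets; the limit takes values in $A$ because $A$ is closed. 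That $\gamma$ is a geodesic ray follows by passing to the limit: for any $s,t \ge 0$ choose $T > \max\{s,t\}$ and take limits in $d(\gamma_{n_k}(s),\gamma_{n_k}(t)) = \abs{s-t}$ to get $d(\gamma(s),\gamma(t)) = \abs{s-t}$, so $\gamma$ is an isometric embedding of $[0,\infty)$ into $A$.

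The only genuinely delicate point is the passage from the finite diverging geodesics to an infinite ray. Since the endpoints $y_n$ diverge rather than converge, Lemma \ref{lemma-geod-unif-conv} does not apply directly; the correct mechanism is to exploit properness to confine each initial segment to a fixed compact ball and then run the Arzel\`a--Ascoli / diagonal extraction. Everything else is routine bookkeeping.
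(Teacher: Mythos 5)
Your proof is correct and follows exactly the route the paper indicates: the paper does not prove this theorem itself but cites it as Theorem 3.2 of \cite{LopPia16}, remarking that it is a direct consequence of the Arzel\`a--Ascoli and Hopf--Rinow theorems, which is precisely the properness-plus-diagonal-extraction argument you carry out.
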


We say that a polygonal ray $L$ is {\it inscribed} in a topological ray $\Gamma$ if there exists $(a_n) \subseteq \Gamma$ such that $L = \bigcup_{n \ge 0}[a_n,a_{n+1}]$. We end this section with the following observation concerning topological and polygonal rays, which, although not used in this work, has a value on its own. 

\begin{proposition}
Let $X$ be a complete uniquely geodesic space of curvature bounded below and let $\Gamma$ be a closed topological ray in $X$. Then there exists a polygonal ray inscribed in $\Gamma$.
\end{proposition}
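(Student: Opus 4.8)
The plan is to select a sequence of points $a_n = h(t_n)$ on $\Gamma$, where $h:[0,\infty)\to\Gamma$ is a homeomorphism and $0=t_0<t_1<\cdots$ are parameters tending to $+\infty$, and then to show that the concatenation $L=\bigcup_{n\ge 0}[a_n,a_{n+1}]$ of the geodesic segments joining consecutive points is a closed topological ray. Since each $[a_n,a_{n+1}]$ is a geodesic segment and there are infinitely many of them, such an $L$ is by definition a polygonal ray, and it is inscribed in $\Gamma$.

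First I would record two facts. (a) The parametrization $h$ is a proper map: if $K\subseteq X$ is compact, then $K\cap\Gamma$ is compact (as $\Gamma$ is closed) and, $h$ being a homeomorphism onto $\Gamma$, its preimage $h^{-1}(K)=h^{-1}(K\cap\Gamma)$ is compact in $[0,\infty)$. In particular, whenever $s_n\to+\infty$ the sequence $(h(s_n))$ has no convergent subsequence, since a limit would lie in $\Gamma$ and force the parameters to converge in $[0,\infty)$. (b) Because $X$ has curvature bounded below, its geodesics do not bifurcate; combining this with uniqueness of geodesics, for any three points $x,y,z$ one has either $[x,y]\subseteq[x,z]$, or $[x,z]\subseteq[x,y]$, or $[x,y]\cap[x,z]=\{x\}$. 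This is exactly what is needed to control how two consecutive segments meet.

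I would then build the partition $(t_n)$ adaptively. The key point is that I work in the \emph{parameter} domain $[0,\infty)$: since I only need $t_n\to+\infty$, properness will be automatic and there is no conflict between forcing $t_n\to\infty$ and keeping the individual segments short. Concretely, I choose the $t_n$ so that $d(a_n,a_{n+1})\to 0$ and so that each new segment $[a_n,a_{n+1}]\subseteq\overline{B}(a_n,d(a_n,a_{n+1}))$ is small enough that: (i) taking $d(a_n,a_{n+1})<d(a_{n-1},a_n)$ and a suitable $t_{n+1}$ with $h(t_{n+1})\notin[a_{n-1},a_n]$, fact (b) forces the consecutive segments $[a_{n-1},a_n]$ and $[a_n,a_{n+1}]$ to meet only at $a_n$; and (ii) the ball $\overline{B}(a_n,d(a_n,a_{n+1}))$ is disjoint from the balls attached to all earlier non-adjacent segments. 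Requirement (ii) amounts at each stage to only finitely many strict inequalities, each asking $d(a_n,a_{n+1})$ to be smaller than a positive quantity determined by the already-fixed points, so it can always be met by taking $t_{n+1}$ close enough to $t_n$.

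It remains to see that $L$ is an embedded, closed ray. Injectivity of the induced piecewise-geodesic parametrization follows from (i) and (ii): consecutive segments share only their common vertex and non-consecutive ones are separated by disjoint balls. Closedness and properness follow from fact (a): if $x_k\in L$ converged to $x$, the segments containing $x_k$ would either range over finitely many indices, so that $x$ lies in a finite union of compact segments and hence in $L$, or over indices $n_k\to\infty$, in which case $a_{n_k}\to x$ with $d(a_{n_k},a_{n_k+1})\to 0$, contradicting that the tail of $(a_n)$ has no convergent subsequence. The hard part will be guaranteeing requirement (ii) \emph{uniformly} along the whole ray, because $\Gamma$ may return arbitrarily close to itself while its parameters stay far apart (two strands of $\Gamma$ approaching one another without accumulating at a point). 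I expect this to be the main obstacle, and I would handle it by exhausting $\Gamma$ through the compact sub-arcs $h([0,N])$: on each such arc, injectivity of $h$ and compactness yield a positive lower bound for the distance between points whose parameters differ by a fixed amount, so an adaptive refinement—finer precisely where distinct strands approach—keeps every segment inside a tube around $\Gamma$ thin enough to preserve the separation, while fact (a) disposes of the tail.
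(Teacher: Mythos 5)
Your strategy is genuinely different from the paper's, and it contains a gap that the sketch does not close. The paper goes in the opposite direction: rather than making the segments short, it makes the vertices \emph{uniformly separated}. In the bounded case it extracts a separated subsequence of $(h(n))$; in the unbounded case it constructs vertices via last-exit times from unit balls so that $d(a_n,a_{n+1})=1$ and $d(a_m,a_n)>1$ for $m\ge n+2$. It then invokes a quantitative consequence of the lower curvature bound (\cite[Lemma 6]{LopPia15}) to conclude that non-adjacent segments are at distance at least a \emph{uniform} $M>0$; that uniform bound is what makes $\bigcup_n[a_n,a_{n+1}]$ closed. You use the curvature hypothesis only qualitatively (non-bifurcation, for your step (i)) and try to replace the uniform separation by an adaptively enforced disjoint-balls condition.

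The gap is in requirement (ii). It is not a condition you can always meet by choosing $t_{n+1}$ close to $t_n$, because it constrains data fixed at earlier stages: the radius $r_j=d(a_j,a_{j+1})$ of the $j$-th ball is chosen at stage $j$, before you know whether $\Gamma$ re-enters $\overline{B}(a_j,r_j)$ at much larger parameters, and nothing prevents it from doing so. If it does, you face a dichotomy. Either you place a vertex $a_m$ on the returning strand, in which case $a_m$ lies \emph{inside} the $j$-th ball and no choice of $t_{m+1}$, however small the new radius, restores disjointness (the centers are already too close); or you skip the entire excursion $h^{-1}\bigl(\overline{B}(a_j,r_j)\bigr)\cap(t_{j+1},\infty)$ with a single segment $[a_m,a_{m+1}]$, whose endpoints sit just outside that ball but whose length you can no longer control, so the segment is not confined to a small ball and you have no way to keep it off $[a_j,a_{j+1}]$ without precisely the kind of comparison estimate you have not used. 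Your proposed repair via the compact arcs $h([0,N])$ only yields separation constants depending on $N$ that may tend to $0$, and since the space is not assumed locally compact you cannot upgrade them by compactness of $\Gamma$ or of balls. A secondary gap: you assert without proof that the adaptive shrinking of the steps is compatible with $t_n\to\infty$, but as described the increments $t_{n+1}-t_n$ could be summable and the construction could stall at a finite parameter. Both gaps point to the same missing ingredient, namely the quantitative use of the lower curvature bound on which the paper's proof rests.
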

\begin{proof}
Let $h:[0,\infty) \to \Gamma$ be a homeomorphism. Suppose first that $\Gamma$ is bounded. Take $a_n = h(n)$, for $n \in \mathbb{N}$. Assume that $(a_n)$ has a convergent subsequence $(a_{n_k})$ whose limit belongs to $\Gamma$. Then $h^{-1}(a_{n_k}) = n_k$, which, by passing to limit, yields a contradiction. Thus, $(a_n)$ has no Cauchy subsequence and therefore must contain a separated subsequence. Now we are in the same situation as in the proof of \cite[Theorem 8]{LopPia15} and so one can construct the desired polygonal ray.

Suppose next $\Gamma$ is unbounded and let $t_0 = 0$. Then, for every $n \in \mathbb{N}$, there exists $t_{n+1} \ge t_n$ with $d(h(0),h(t_{n+1})) \ge n+1$. Clearly, $\lim_{n \to \infty} t_n = \infty$. If for each $n \in \mathbb{N}$ one can find $s_n > t_n$ such that the sequence $(h(s_n))$ is bounded, then one can reason as above in order to obtain the polygonal ray. Otherwise, let $a_0=h(0)$. Then there exists $n_0 \in \mathbb{N}$ such that for every $t > t_{n_0}$, $h(t) \notin \overline{B}\left(a_0,1\right)$. Let $C$ be the compact set $h\left([0,t_{n_0}]\right) \cap \overline{B}\left(a_0,1\right)$ and take $t'_0 \in [0,t_{n_0}]$ such that 
\[C=h\left([0,t'_0]\right) \cap \overline{B}\left(a_0,1\right),\] 
with $h(t'_0) \in \overline{B}\left(a_0,1\right)$. Denoting $a_1 = h(t'_0)$, we have that $d(a_0,a_1) = 1$. As before, considering $h|_{[t'_0,\infty)}$, one can find a point $a_2 \in \overline{B}\left(a_1,1\right)$ with $d(a_0,a_2) > 1$ and $d(a_1,a_2) = 1$ such that for any $t > h^{-1}(a_2)$, $h(t) \notin \overline{B}\left(a_1,1\right)$.  In this way one builds a sequence $(a_n)$ such that for each $n \in \mathbb{N}$ and $m \ge n+2$, $d(a_n,a_{n+1}) = 1$ and $d(a_m,a_n) > 1$. Moreover, because geodesics cannot bifurcate, 
\[[a_{n+1},a_n] \cap [a_{n+1},a_{n+2}] = \{a_{n+1}\},\]
for each $n \in \mathbb{N}$. Let $n \in \mathbb{N}$, $m \ge n+2$ and $x \in [a_m,a_{m+1}]$. By the triangle inequality, $d(x,a_n) \ge 1/2$ and $d(x, a_{n+1}) \ge 1/2$. Applying now \cite[Lemma 6]{LopPia15} to the points $x, a_n, a_{n+1}$, there exists $M > 0$ (not depending on either $x$, $n$ or $m$) such that $\text{dist}\left(x,[a_n,a_{n+1}]\right) \ge M$. Thus,  $\text{dist}\left([a_m,a_{m+1}],[a_n,a_{n+1}]\right) \ge M$. If $(y_i) \subseteq \bigcup_{n \ge 0}[a_n,a_{n+1}]$ is a convergent sequence, then there exists $N \in \mathbb{N}$ such that $y_i \in [a_N,a_{N+1}]$ for $i$ sufficiently large. This shows that $\bigcup_{n \ge 0}[a_n,a_{n+1}]$ is closed, which ends the proof. \qed
\end{proof}

\section{Fixed point property and compact sets}\label{fpp}
The well-known Schauder-Tychonoff theorem  asserts that a compact convex subset of a locally convex linear topological space has the fixed point property for continuous mappings.  Klee  \cite{klee} proved a converse of this result by using the fact that any noncompact convex subset of a locally convex metrizable linear topological space contains a closed topological ray and so lacks the fixed point property (see also \cite{Db97}). Counterparts of Klee's result in the framework of geodesic spaces have been recently proved by assuming either lower curvature bounds \cite{LopPia15} or local compactness \cite{LopPia16}.

The Schauder conjecture was a long-standing open question asking whether the local convexity condition can be omitted in the Schauder-Tychonoff theorem. A positive answer was given by Cauty \cite{Ca07,Ca17}. To introduce this result, we need to define first a particular class of contractible spaces (see \cite[p.~187]{Du65}).
 
\begin{definition}
A metric space $X$ is called {\it uniformly contractible} ($\UC$ for short) if there exists a continuous mapping $\lambda \colon X\times X\times [0,1]\to X$ such that 
\begin{equation}\label{def-eq-map}
\lambda(x,y,0)=x, \quad \lambda(x,y,1)=y, \quad \text{and }\quad \lambda(x,x,t)=x,
\end{equation}
for all $x,y,\in X$ and $t\in[0,1]$. 
\end{definition}

A more general notion is the following one.
\begin{definition}
A metric space $X$ is called {\it uniformly locally contractible} ($\ULC$ for short) if there exists a neighborhood $U$ of the diagonal of $X\times X$ and a mapping $\lambda: U\times [0,1] \to X$ satisfying \eqref{def-eq-map} for all $(x,y) \in U$ and $t\in [0,1]$.
\end{definition}

Let $X$ be a $\ULC$ space and $f : X\to X$. We say that $f$ is {\it compact} if $f(X)$ is contained in a compact subset of $X$. The next theorem was proved in \cite{Ca07} and provides a sufficient condition for a continuous and compact mapping $f$ to possess fixed points. This condition is given in terms of the Lefschetz number of $f$, which is denoted by $\Lambda(f)$ (see \cite[Chapter V.$\S$15]{Gr03} for more details).

\begin{theorem}\label{thm-Cauty}
Let $X$ be a $\ULC$ space and $f\colon X\to X$ continuous and compact. If $\Lambda(f) \not= 0$, then $f$ has a fixed point.
\end{theorem}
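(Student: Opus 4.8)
The plan is to reduce the problem to the classical Lefschetz--Hopf theorem on finite polyhedra by a nerve-approximation argument, using the uniform local contraction $\lambda$ to realize the approximating maps, and then to pass to the limit using compactness of $f$. Write $K = \overline{f(X)}$, which is compact by hypothesis; every point relevant to the fixed-point conclusion lies in or near $K$, so the possibly infinite-dimensional nature of $X$ away from $K$ will not obstruct us.

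First I would fix a scale at which $\lambda$ operates: since $U$ is a neighborhood of the diagonal, there is $\delta > 0$ with $\lambda(x,y,\cdot)$ defined and of small diameter whenever $d(x,y) < \delta$, and by uniform continuity of $\lambda$ on a neighborhood of $K \times K \times [0,1]$ these contractions are controlled uniformly over $K$. Given $\eps > 0$, cover $K$ by finitely many balls $\overline{B}(p_i, r)$ with $r \ll \delta$, form the nerve $N$ of this cover (a finite simplicial complex), and use a partition of unity subordinate to the cover to build a map $\kappa \colon V \to N$ from a neighborhood $V \supseteq K$ into $N$. In the reverse direction, $\lambda$ lets me define $\iota \colon N \to X$ by sending each vertex to the corresponding $p_i$ and extending over higher simplices inductively, filling in the required contractions via $\lambda$ — possible precisely because any centers indexing a common simplex are within the contractibility scale. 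The composition $\iota \circ \kappa$ is then $\eps$-close to the inclusion $V \hookrightarrow X$, and homotopic to it through maps of displacement at most $\eps$.

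Next I would transfer $f$ to the polyhedron. Setting $\tilde f = \kappa \circ f \circ \iota \colon N \to N$ gives a self-map of a finite complex, and the key algebraic step is $\Lambda(\tilde f) = \Lambda(f)$. This follows from homotopy invariance together with the commutativity identity $\Lambda(g \circ h) = \Lambda(h \circ g)$: one has $\tilde f = \kappa \circ (f \circ \iota)$ while $(f \circ \iota) \circ \kappa = f \circ (\iota \circ \kappa) \simeq f$, the last homotopy being the $\eps$-small one above. Hence $\Lambda(\tilde f) = \Lambda(f) \neq 0$, so by the classical Lefschetz theorem for finite polyhedra $\tilde f$ has a fixed point; unravelling the approximations yields $x_\eps \in X$ with $d(x_\eps, f(x_\eps)) \le C\eps$ for a uniform constant $C$. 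Letting $\eps \to 0$ gives a sequence with $d(x_n, f(x_n)) \to 0$; since $f(x_n) \in K$ is compact, a subsequence satisfies $f(x_{n_k}) \to p$, whence $x_{n_k} \to p$ and continuity forces $f(p) = p$.

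The hard part will be the construction of the approximating maps, and above all ensuring that $\Lambda(f)$ is even well-defined for maps on a $\ULC$ space that need not be an ANR, together with the validity of the trace and commutativity identities in this generality. For compact ANRs this machinery is classical, but a $\ULC$ space can fail to be an ANR, and the essential content of Cauty's argument is the construction of a homology theory (via free simplicial resolutions, in the spirit of algebraic ANRs) in which compact maps are admissible, the Lefschetz number is defined, and the Hopf trace formula and homotopy--commutativity invariance persist. Once that infrastructure is in place the geometric nerve approximation goes through routinely; building it is the genuine obstacle and the reason the full proof is substantial.
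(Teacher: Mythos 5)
This theorem is not proved in the paper at all: it is imported verbatim from Cauty's work (reference \cite{Ca07}), and the authors use it as a black box. So the relevant question is whether your sketch actually constitutes a proof, and it does not --- there is a genuine gap, and moreover you locate it in the wrong place.

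You concede in your last paragraph that defining $\Lambda(f)$ and establishing the trace and commutativity identities for a $\ULC$ space that is not an ANR is ``the genuine obstacle,'' and you defer that entire construction to ``Cauty's argument.'' A proof that defers its central difficulty to the very paper it is trying to reprove is not a proof. But the situation is worse than you suggest, because you also claim that ``once that infrastructure is in place the geometric nerve approximation goes through routinely,'' and that is false. The inductive extension of $\iota$ over the simplices of the nerve $N$ requires one application of $\lambda$ per dimension, and each application degrades the smallness scale: to extend over a $k$-simplex you must apply $\lambda$ to points lying on tracks produced by earlier applications, so you need a descending chain of scales $\delta_0 \gg \delta_1 \gg \cdots \gg \delta_k$ chosen in advance. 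This is harmless only if the dimension of $N$ is bounded independently of $\eps$. As $\eps \to 0$ the covers of $K = \overline{f(X)}$ get finer and their multiplicity --- hence $\dim N$ --- is unbounded whenever $K$ is infinite-dimensional, which is exactly the case of interest (for finite-dimensional or ANR targets the classical Lefschetz--Hopf theory already applies and Cauty's theorem is not needed). This failure of iterated smallness is precisely why a locally contractible infinite-dimensional compactum need not be an ANR, why the Schauder conjecture resisted for decades, and why Cauty's actual argument replaces the geometric nerve approximation by free simplicial resolutions and ``algebraic ANR'' techniques rather than building on it. In short: both the homological foundations \emph{and} the approximation scheme are missing, and the correct move in the context of this paper is simply to cite \cite{Ca07}.
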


As a consequence, we obtain the following corollary that is essential for the proof of the main theorem in this section. In fact, this result generalizes Corollary 2.10 from \cite{LopPia16} by assuming local compactness instead of compactness and dropping the local convexity condition. 
\begin{corollary}\label{GenST}
Let $X$ be a complete, locally compact, uniquely geodesic space. If $f : X\to X$ is continuous and bounded, then $f$ has a fixed point. 
\end{corollary}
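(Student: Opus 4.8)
The plan is to verify the three hypotheses of Cauty's theorem (Theorem \ref{thm-Cauty}) for the map $f$: that the ambient space $X$ is $\ULC$, that $f$ is compact, and that its Lefschetz number does not vanish.

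First I would show that $X$ is in fact $\UC$, which is stronger than $\ULC$. Define $\lambda : X \times X \times [0,1] \to X$ by $\lambda(x,y,t) = \gamma_{x,y}(t)$, where $\gamma_{x,y} : [0,1] \to X$ is the unique linearly reparametrized geodesic from $x$ to $y$; this is well defined because $X$ is uniquely geodesic. The identities \eqref{def-eq-map} hold immediately, since $\gamma_{x,y}(0) = x$, $\gamma_{x,y}(1) = y$, and $\gamma_{x,x}$ is the constant path at $x$. The only substantial point is continuity of $\lambda$, and this is exactly where Lemma \ref{lemma-geod-unif-conv} enters: if $(x_n,y_n,t_n) \to (x,y,t)$, then $\gamma_{x_n,y_n} \to \gamma_{x,y}$ uniformly, whence $\lambda(x_n,y_n,t_n) = \gamma_{x_n,y_n}(t_n) \to \gamma_{x,y}(t) = \lambda(x,y,t)$. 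Thus $X$ is $\UC$, hence $\ULC$, and in particular contractible (fixing a basepoint $x_0$, the map $(x,t) \mapsto \lambda(x,x_0,t)$ is a contraction to $x_0$).

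Next I would check that $f$ is compact in the sense required by Theorem \ref{thm-Cauty}. Since $X$ is complete, locally compact and geodesic, the Hopf-Rinow theorem guarantees that $X$ is proper, so closed bounded subsets are compact. As $f$ is bounded, the set $\overline{f(X)}$ is closed and bounded, hence compact, and $f(X) \subseteq \overline{f(X)}$ shows that $f$ is compact.

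Finally I would compute the Lefschetz number. Because $X$ is contractible it has the homology of a point, so $H_0(X)$ is the coefficient field and $H_n(X) = 0$ for $n \ge 1$. Any continuous self-map fixes the single path-component, so $f_*$ acts as the identity on $H_0$ and trivially in higher degrees, giving $\Lambda(f) = 1 \ne 0$; Theorem \ref{thm-Cauty} then yields a fixed point. The step requiring the most care is the continuity of $\lambda$, which is precisely where completeness and local compactness are used through Lemma \ref{lemma-geod-unif-conv}: without such control the geodesic between two points need not depend continuously on its endpoints, and the construction of the contraction would break down. By comparison, the Lefschetz computation is routine once contractibility has been established, and it is what closes the argument.
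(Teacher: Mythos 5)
Your proposal is correct and follows essentially the same route as the paper: establish that $X$ is $\UC$ via the geodesic contraction $\lambda$ with continuity supplied by Lemma \ref{lemma-geod-unif-conv}, deduce compactness of $f$ from boundedness and the Hopf--Rinow theorem, and conclude $\Lambda(f)=1$ from contractibility before invoking Theorem \ref{thm-Cauty}. The only cosmetic difference is that you compute the Lefschetz number directly from the homology of a point, whereas the paper cites the corresponding lemma from Granas--Dugundji.
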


\begin{proof}
We show first that $X$ is a $\UC$ space. To see this, we define the mapping $\lambda\colon X\times X\times [0,1]\to X$ by taking $\lambda(x,y,t)$ the unique point on the geodesic segment $[x,y]$ such that $d(x,\lambda(x,y,t))=td(x,y)$. Then it is enough to prove that any sequence $(\lambda(x_n,y_n,t_n))$ converges to $\lambda(x,y,t)$ if $x_n\to x$, $y_n\to y$ and $t_n\to t$, which immediately follows by Lemma \ref{lemma-geod-unif-conv}. 

Thus, $X$ is contractible, so we can apply \cite[Lemma 3.2, Chapter V.$\S$15]{Gr03} to conclude that $\Lambda(f) = 1$. Moreover, $f$ is compact because $f(X) \subseteq \overline{f(X)}$, which is bounded and closed, hence compact by the Hopf-Rinow theorem. As $X$ is a $\ULC$ space, the result is now obtained from Theorem \ref{thm-Cauty}. \qed
\end{proof}

Obviously, Corollary \ref{GenST} holds true for each self-mapping defined on closed and convex subsets of $X$, so we can now state the main result of this section.
\begin{theorem}\label{characfpp-thm}
Let $(X,d)$ be a complete, locally compact, uniquely geodesic space $X$ and $A \subseteq X$ nonempty, closed and convex. Then $A$ has the fixed point property for continuous mappings if and only if $A$ is compact. 
\end{theorem}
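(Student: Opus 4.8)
The plan is to establish the two implications separately, synthesizing the machinery already assembled in the excerpt: Corollary \ref{GenST}, Theorem \ref{thm-noncompact-ray}, and the closed-topological-ray remark preceding the latter.

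For the implication that compactness yields the fixed point property, I would argue as follows. Let $f \colon A \to A$ be continuous. Since $A$ is compact it is bounded, and hence $f$ is a bounded mapping into the closed and convex set $A$. By the extension of Corollary \ref{GenST} to self-mappings of closed convex subsets, noted immediately above the statement of this theorem, $f$ has a fixed point. Thus $A$ has the fixed point property.

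For the converse I would prove the contrapositive: if $A$ is noncompact, then $A$ lacks the fixed point property. Since $A$ is a closed and convex subset of a complete, locally compact, uniquely geodesic space, Theorem \ref{thm-noncompact-ray} guarantees that $A$ contains a geodesic ray $\Gamma$. By definition a geodesic ray is a closed topological ray, and it is a closed subset of $A$. Applying the remark preceding Theorem \ref{thm-noncompact-ray} with $A$ in the role of the ambient space --- here one uses that $A$, being a closed subset of a metric space, is itself normal, so the Tietze extension theorem is available --- one obtains a fixed-point-free continuous self-mapping of $A$. Concretely, if $h \colon [0,\infty) \to \Gamma$ is a homeomorphism, then $h^{-1}$ extends to a continuous $H \colon A \to [0,\infty)$, and composing $H$ with a fixed-point-free self-map of $\Gamma$, such as the one sending $h(t) \mapsto h(t+1)$, and then with $h$, produces a continuous self-map of $A$ without fixed points. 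Therefore $A$ does not have the fixed point property.

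The argument involves no genuine obstacle beyond verifying that the topological-ray construction, stated in the excerpt for the whole space $X$, transfers verbatim to the closed convex subset $A$ regarded as its own ambient metric space; this rests only on the closedness of $\Gamma$ in $A$ and on the normality of $A$, both of which are immediate. All the analytic content --- the use of Cauty's theorem through Corollary \ref{GenST} in one direction and the existence of geodesic rays via Theorem \ref{thm-noncompact-ray} in the other --- has already been isolated, so the result follows by combining these two ingredients.
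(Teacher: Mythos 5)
Your proposal is correct and follows the same overall decomposition as the paper: compactness gives the fixed point property via the closed-convex-subset version of Corollary \ref{GenST}, and noncompactness is refuted by producing a geodesic ray through Theorem \ref{thm-noncompact-ray} and turning it into a fixed-point-free continuous self-map. The only difference is in how that last map is built. You invoke the Tietze-extension remark from Section \ref{section-rays} (correctly noting that it transfers to $A$ as its own ambient space, since the image of a geodesic ray is closed and $A$ is normal), whereas the paper writes the map down explicitly as $f(x)=\gamma(d(\gamma(0),x)+1)$ and checks directly that it is nonexpansive and fixed point free. The paper's construction is slightly more economical --- it bypasses Tietze altogether and yields the marginally stronger conclusion that even the fixed point property for \emph{nonexpansive} (in particular Lipschitz) mappings fails --- but your route is equally valid and has the virtue of reusing a general principle already established in the text. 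No gaps.
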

\begin{proof}
If $A$ is compact, then any continuous self-mapping on $A$ is bounded, so we can apply Corollary \ref{GenST} to obtain that it has fixed points. Assume next that $A$ has the fixed point property for continuous mappings and suppose by contradiction that $A$ is not compact. By Theorem \ref{thm-noncompact-ray}, there exists a geodesic ray $\gamma : [0,\infty) \to A$. Define the mapping $f : A \to A$ by $f(x)=\gamma(d(\gamma(0),x)+1)$, $x \in A$. One can easily see that $f$ is fixed point free. Since 
\begin{align*}
d(f(x),f(y)) & = d(\gamma(d(\gamma(0),x)+1),\gamma(d(\gamma(0),y)+1))\\
& = |d(\gamma(0),x) - d(\gamma(0),y)| \le d(x,y),
\end{align*}
for all $x,y \in A$, $f$ is nonexpansive, hence continuous. This contradicts the fact that $A$ has the fixed point property for continuous mappings. Therefore $A$ must be compact. \qed
\end{proof}

A counterpart of this result can also be proved in the absence of local compactness when the geodesic space has curvature bounded below (see \cite[Theorem 10]{LopPia15}). For nonlocally compact spaces, the lower curvature bound cannot be dropped as follows from \cite[Theorem 3.4]{Ki04} mentioned in the previous section. 

\section{The Lion-Man game and compact sets}\label{lion-game}

Let $(X,d)$ be a uniquely geodesic space and $A \subseteq X$ nonempty and convex. Take $D > 0$ and suppose that $L_0, M_0 \in A$ are the starting points of the lion and the man, respectively. At step $i+1$, $i \in \N$, the lion moves from the point $L_i$ to the point $L_{i+1} \in [L_i,M_i]$ such that $d(L_i,L_{i+1}) = \min\{D,d(L_i,M_i)\}$. The man moves from the point $M_i$ to the point $M_{i+1} \in A$ satisfying $d(M_i,M_{i+1}) \le D$. We say that lion wins if the sequence $(d(L_{i+1},M_i))$ converges to $0$. Otherwise the man wins. Denote in the sequel $D_i = d(L_i,M_i)$, $i \in \N$.

It is easy to see that the lion wins if and only if either of the following two mutually exclusive situations holds:
\begin{itemize}
\item[(1)] there exists $i_0 \in \N$ such that $D_{i_0} \le D$. In this case, $L_{i+1} = M_i$ for all $i \ge i_0$;
\item[(2)] $D_i > D$ for all $i \in \N$ and $\lim_{i \to \infty}D_i = D$. Note that the last limit exists because in this case the sequence $(D_i)$ is nonincreasing as
\[D_{i+1} \le d(L_{i+1},M_i) + d(M_i,M_{i+1}) = D_i - D + d(M_i,M_{i+1}) \le D_i,\]
for all $i \in \N$.
\end{itemize}
Consequently, the man wins if and only if $D_i > D$ for all $i \in \N$ and $\lim_{i \to \infty}D_i > D$.

We start our discussion with an example which shows that even in compact convex subsets of the Euclidean plane, it is possible to construct games of this type where the lion wins by fulfilling conditions (1) and (2), respectively. Thus, we cannot define the success of the lion assuming just one of the conditions (1) or (2) if we aim to obtain a characterization of compact sets via this property. In the next example, which is inspired by the one due to Besicovitch, we use the notation introduced above.

\begin{example}\label{ex-solution}
Let $D > 0$ and $A = \overline{B}(O,7D)$, where $O$ is the origin in $\R^2$. Suppose that $L_0 = O$ and $M_0 \in A$ with $D < D_0 < 2D$. Furthermore, assume that at each step $i+1$, $i \in \N$, $\|M_i - M_{i+1}\| = D$ and the clockwise angle from $M_iL_i$ to $M_iM_{i+1}$ $\angle_{M_i}(L_i,M_{i+1}) = \pi/2$. 

Denoting for $i \in \N$, $t_{i+1} = \|L_{i+1} - M_i\|$, we have $(D + t_{i+2})^2 = D^2 + t^2_{i+1}$. Since $t_1 \in (0,D)$, this immediately yields $t_i \in  (0,D)$ for all $i \in \N$, so $2Dt_{i+2} + t^2_{i+2} < Dt_{i+1}$, from where $t_{i+2} < t_{i+1}/2$. Hence,
\begin{equation}\label{example-eq1}
\sum_{i \ge 1} t_i \le 2D.
\end{equation}
Clearly, condition (1) fails since $D_i = D + t_{i+1} > D$ for each $i \in \N$, but condition (2) is satisfied as $\lim_{i \to \infty}D_i = D$.

It remains to show that the lion and the man move indeed within the set $A$. To this end, denote $\alpha_{i+1} = \angle_{M_{i+1}}(L_{i+1},M_i)$, $i \in \N$. Then $t_{i+1} = D \tan \alpha_{i+1}$ and
\[\alpha_{i+1} = \arctan \frac{t_{i+1}}{D} \le \frac{t_{i+1}}{D}.\] 
For every $i \in \N$, take $B_i \in \R^2$ such that $B_iL_{i+1}M_iM_{i+1}$ is a rectangle. Then $\|L_{i+1}-B_i\| = \|L_{i+1} - L_{i+2}\| = D$ and $\angle_{L_{i+1}}(B_i,L_{i+2}) = \alpha_{i+1}$. Now for all $i \in \N$, let $C_i \in \R^2$ such that $C_iL_iL_{i+1}B_i$ is a square. Because $\|L_{i+1}-L_i\| = \|L_{i+1} - C_{i+1}\| = D$ and $\angle_{L_{i+1}}(L_i,C_{i+1}) = \alpha_{i+1}$ we obtain
\begin{equation}\label{example-eq2}
\|L_i - C_{i+1}\| = \|B_i - L_{i+2}\|  = 2D\sin\frac{\alpha_{i+1}}{2} \le t_{i+1}.
\end{equation}
We also have $\|L_{i+1} - C_i\| = \|L_{i+1} - B_{i+1}\| = \sqrt{2}D$ and $\angle_{L_{i+1}}(C_i,B_{i+1}) = \alpha_{i+1}$. Thus, 
\begin{equation}\label{example-eq3}
\|C_i - B_{i+1}\| = 2\sqrt{2}D \sin\frac{\alpha_{i+1}}{2} \le 2t_{i+1}.
\end{equation}
By \eqref{example-eq2} and \eqref{example-eq3},
\begin{align*}
\|L_{4k}-L_{4k+4}\| & \le \|L_{4k}-C_{4k+1}\| + \|C_{4k+1}-B_{4k+2}\| + \|B_{4k+2}-L_{4k+4}\| \\
& \le t_{4k+1} + 2t_{4k+2} + t_{4k+3},
\end{align*}
for all $k \in \N$. Therefore, by \eqref{example-eq1}, $\|L_0 - L_{4k}\| \le 2\sum_{i \ge 1}t_i \le 4D$ for all $k \in \N$, from where $\|L_0 - L_i\| \le 6D$ for all $i \in \N$. Moreover, 
\[\|L_0 - M_i\| \le \|L_0 - L_{i+1}\| + \|L_{i+1} - M_i\| \le 7D.\]

Note also that one can easily modify this example so that condition (1) holds: just consider that at the second step, the man changes the direction and moves from $M_1$ to $M_2 \in [M_1,L_1]$. In this case, $D_2 < D$. 
\end{example}

The main result of this section gives a characterization of compactness of a set where the Lion-Man game is played in terms of the success of the lion. We use again the notation introduced at the beginning of this section.

\begin{theorem}\label{thm-compact-lion}
Let $(X,d)$ be a complete, locally compact, uniquely geodesic space. Suppose $A \subseteq X$ is a nonempty, closed and strongly convex set where the Lion-Man game is played following the rules described above. Then $A$ is compact if and only if the lion always wins.
\end{theorem}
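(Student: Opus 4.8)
The plan is to prove both implications by contraposition, treating the easy direction first. For ``the lion always wins $\Rightarrow$ $A$ compact'', I would assume $A$ is noncompact and exhibit a single game in which the man wins. By Theorem~\ref{thm-noncompact-ray}, $A$ contains a geodesic ray $\gamma\colon[0,\infty)\to A$. Put $L_0=\gamma(0)$ and $M_0=\gamma(2D)$, and let the man walk along the ray, $M_i=\gamma(2D+iD)$. Since the space is uniquely geodesic, $[L_i,M_i]$ is the subsegment $\gamma([iD,2D+iD])$, so the lion's prescribed move sends $L_i=\gamma(iD)$ to $L_{i+1}=\gamma((i+1)D)$. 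Hence $D_i=d(L_i,M_i)=2D>D$ for every $i$ and $\lim_i D_i=2D>D$, so the man wins. This shows that if the lion always wins then $A$ must be compact; note that strong convexity plays no role here.

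For the harder implication ``$A$ compact $\Rightarrow$ the lion always wins'', I would again argue by contraposition: assume some game is won by the man and deduce that $A$ is noncompact. By the analysis preceding the theorem, a win for the man is exactly case (2): $D_i>D$ for all $i$ and $D_i\downarrow D_\infty$ with $c:=D_\infty-D>0$. Then the lion steps by exactly $D$ each turn, so $d(L_{i+1},M_i)=D_i-D\to c$; moreover, from $D_{i+1}\le d(L_{i+1},M_i)+d(M_i,M_{i+1})=D_i-D+d(M_i,M_{i+1})$ together with $D_{i+1}-D_i\to0$ one gets $d(M_i,M_{i+1})\to D$. The idea is that the man is asymptotically forced to flee straight away from the lion: the triangle inequality above becomes an asymptotic equality, so asymptotically $M_i$ lies between $L_{i+1}$ and $M_{i+1}$, while by construction $L_{i+1}$ lies on $[L_i,M_i]$. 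These two ``betweenness'' relations are, however, only approximate, so they cannot be fed directly into the betweenness property.

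To make them exact I would pass to a limit using the compactness of $A$. By a diagonal extraction, choose a subsequence $(i_k)$ along which, for every fixed $j\ge0$, the points $L_{i_k+j}$ and $M_{i_k+j}$ converge to limits $\bar L_j$ and $\bar M_j$ in $A$. In the limit configuration the relevant distances are exact: $d(\bar L_j,\bar M_j)=D+c$, $d(\bar L_j,\bar L_{j+1})=D$, $d(\bar M_j,\bar M_{j+1})=D$, and $d(\bar L_{j+1},\bar M_j)=c$. The distance characterization of points on a geodesic segment then gives $\bar L_{j+1}\in[\bar L_j,\bar M_j]$, and the now-exact equality $d(\bar L_{j+1},\bar M_j)+d(\bar M_j,\bar M_{j+1})=d(\bar L_{j+1},\bar M_{j+1})$ gives $\bar M_j\in[\bar L_{j+1},\bar M_{j+1}]$. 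Since $c>0$ and $D>0$, the four points $\bar L_j,\bar L_{j+1},\bar M_j,\bar M_{j+1}$ are pairwise distinct, so the betweenness property (available because $A$ is strongly convex, Proposition~\ref{prop-btw-local-geod}) yields $\bar L_{j+1},\bar M_j\in[\bar L_j,\bar M_{j+1}]$. Combining this with $\bar L_{j+2}\in[\bar L_{j+1},\bar M_{j+1}]$, a limit of the lion's moves obtained via Lemma~\ref{lemma-geod-unif-conv}, places $\bar L_j,\bar L_{j+1},\bar L_{j+2}$ consecutively on the geodesic $[\bar L_j,\bar M_{j+1}]$ at arclengths $0,D,2D$. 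Consequently the concatenation $\bigcup_{j\ge0}[\bar L_j,\bar L_{j+1}]$ is geodesic across each vertex $\bar L_{j+1}$, i.e.\ a local geodesic; by strong convexity it is a genuine geodesic, and since it has infinite length it is a geodesic ray contained in $A$. This contradicts the compactness of $A$ by Theorem~\ref{thm-noncompact-ray}, completing the contrapositive.

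The hard part will be the limiting step: justifying the simultaneous (diagonal) convergence of all shifted sequences, checking via uniqueness of geodesics and Lemma~\ref{lemma-geod-unif-conv} that the geodesic segments and the membership relations pass correctly to the limit, and verifying that the limiting relations are \emph{exact} equalities so that the betweenness property genuinely applies. Once the exact collinear chain is in place, the contradiction is immediate.
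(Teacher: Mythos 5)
Your proposal is correct and follows essentially the same route as the paper's proof: in both arguments compactness is used to pass to limits of the players' positions along subsequences so that the approximate betweenness relations become exact, and then the betweenness property supplied by strong convexity (Proposition~\ref{prop-btw-local-geod}) forces the limiting lion positions onto a single geodesic with consecutive gaps $D$, contradicting the boundedness of $A$. The only differences are organizational --- you extract one diagonal subsequence and finish by assembling a local geodesic ray, whereas the paper runs a nested induction anchored at $l_0$ and stops as soon as $d(l_0,l_N)=ND$ exceeds $\operatorname{diam} A$ --- plus a harmless mislabelling of the man's winning condition as ``case (2)''.
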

\begin{proof}
Assume first that $A$ is not compact. By Theorem \ref{thm-noncompact-ray}, there exists a geodesic ray $\gamma : [0, \infty) \to A$. Take $L_0 = \gamma(0)$ and $M_0 = \gamma(D+1)$. Then the man and the lion can move along this ray maintaining their distance $D_i = d(L_i,M_i)$ constantly equal to $D+1$. This means that the man wins.

We suppose next that $A$ is compact, hence bounded. Then there exists $N \in \N$ such that 
\begin{equation}\label{thm-compact-lion-eq1}
(N-1)D \le \text{diam}\; A < ND.
\end{equation}
Note that $N \ge 2$ (otherwise the game cannot be played in $A$). Assume by contradiction that the man wins. Then $D_i > D$ for all $i \in \N$ and 
\begin{equation}\label{thm-compact-lion-eq2}
\lim_{i \to \infty} D_i = D^* > D. 
\end{equation}

We show by induction that for any fixed $n \in \N$, the sequences $(L_i)$ and $(M_i)$ contain respective subsequences $(L_{i^n_j})_j$ and $(M_{i^n_j})_j$ (thus having both the same sequence of indices $(i^n_j)_j$ in $(L_i)$ and $(M_i)$) such that their respective limit points in $A$, $l_n$ and $m_n$, satisfy the following property: $l_n \in [l_0,m_n]$ with $d(l_0,l_n) = nD$. Note that, by \eqref{thm-compact-lion-eq2}, $d(l_n,m_n) = D^*$.

For $n = 0$ this is clear as $A$ is compact. Suppose that the above statement holds for $n=k$. We prove that it also holds for $n=k+1$. To this end, choose a convergent subsequence of $(L_{i^k_j +1})$ and denote its limit by $l_{k+1} \in A$. Since for all $j \in \N$, $L_{i^k_j + 1} \in [L_{i^k_j},M_{i^k_j}]$ with $d(L_{i^k_j},L_{i^k_j +1}) = D$, we have $d(l_k,l_{k+1}) = D$ and $d(l_{k+1},m_k) = D^* - D$, from where $l_{k+1} \in [l_k,m_k]$ with $d(l_k,l_{k+1}) = D$. This implies, by compactness of $A$, that the whole sequence $(L_{i^k_j + 1})$ converges to $l_{k+1}$. Furthermore, since $l_k \in [l_0,m_k]$ such that $d(l_0,l_k) = kD$, it follows that $l_{k+1} \in [l_0,m_k]$ and $d(l_0,l_{k+1}) = (k+1)D$.

Take a subsequence $(i^{k+1}_j)_j$ of $(i^k_j  + 1)_j$ such that $(M_{i^{k+1}_j}) \subseteq (M_{i^{k}_j + 1})$ is convergent and denote its limit by $m_{k+1} \in A$. In particular, the sequence $(L_{i^{k+1}_j})$ converges to $l_{k+1}$. Because $d(M_{i^k_j},M_{i^k_j +1}) \le D$ for all $j \in \N$, it follows that $d(m_k,m_{k+1}) \le D$. Furthermore, $d(l_{k+1},m_k) = D^* - D$ and $d(l_{k+1},m_{k+1}) = D^*$, so $m_k \in [l_{k+1}, m_{k+1}]$. Recalling that $l_{k+1} \in [l_0,m_k]$, by strong convexity of $A$ and Proposition \ref{prop-btw-local-geod}, we obtain $l_{k+1} \in [l_0,m_{k+1}]$. This finishes the induction argument.

Thus, there exist $l_0, l_N \in A$ such that $d(l_0,l_N) = ND$, which contradicts \eqref{thm-compact-lion-eq1} and therefore shows that the lion must win. \qed
\end{proof}

Now it is clear that Theorem \ref{mainthm} is a direct consequence of Theorems \ref{characfpp-thm}, and \ref{thm-compact-lion}. Moreover, by Theorem \ref{thm-noncompact-ray}, each of the statements in Theorem \ref{mainthm} is equivalent to the absence of geodesic rays from $A$.

The continuous Lion-Man game where the lion knows the position of the man at every time and moves directly towards it was addressed in $\CAT(k)$ spaces by Jun \cite{Jun14} who showed the existence of continuous pursuit curves and studied their regularity, as well as their approximation by discrete ones.

It should be noted that while, as far as we know, Theorem \ref{mainthm} is the first result which relates the Lion-Man game to the fixed point property for continuous mappings, considering the lion and the man moving in the closed unit disc, Bollob\'{a}s et al. \cite[Theorem 7]{BLW12} used the Brouwer fixed point theorem to prove that the man does not have a so-called continuous winning strategy. Using Corollary \ref{GenST} and following the same argument from \cite{BLW12}, one can prove an analogous result in a compact uniquely geodesic space. We also remark that \cite{BLW12} contains an analysis of the relation between the solution of the continuous game and its approximation by a discrete version. 

\section{Acknowledgement}
The authors would like to thank the referee for the comments and suggestions that helped improve the manuscript.

\end{document}